\newtheorem{definition}[equation]{Definition}
\newtheorem{lemma}[equation]{Lemma}
\newtheorem{proposition}[equation]{Proposition}
\newtheorem{theorem}[equation]{Theorem}
\newtheorem{corollary}[equation]{Corollary}
\newtheorem{remark}[equation]{Remark}
\newtheorem{conjecture}[equation]{Conjecture}
\newcommand\lemmaref[1]{Lemma~\ref{#1}}
\newcommand\propositionref[1]{Proposition~\ref{#1}}
\newcommand\theoremref[1]{Theorem~\ref{#1}}
\newcommand\corollaryref[1]{Corollary~\ref{#1}}
\newcommand\conjectureref[1]{Conjecture~\ref{#1}}
\title{Kobayashi's conjecture on associated varieties for $(\mathrm{E}_{6(-14)},\mathrm{Spin}(8,1))$}
\author{Haian HE}
\date{}
\address{Department of Mathematics, College of Sciences, Shanghai University, No. 99 Shangda Road, Baoshan District, Shanghai, 200444 China P. R.}
\email{hebe.hsinchu@yahoo.com.tw}
\subjclass[2010]{22E46 $\cdot$ 22E47}
\keywords{associated variety; discrete series representation; Klein four symmetric pair; symmetric pair}
\begin{document}
\begin{abstract}
The author confirms a conjecture on associated varieties by Toshiyuki KOBAYASHI for the Klein four symmetric pair $(\mathrm{E}_{6(-14)},\mathrm{Spin}(8,1))$, which provides an alternative way to confirm the conjecture for the symmetric pair $(\mathrm{Spin}(8,2),\mathrm{Spin}(8,1))$. Also, for Klein four symmetric pairs $(G,G^\Gamma)$ with the exceptional simple Lie groups $G$ of Hermitian type, there exists a discrete series representation of $G$ which is $G^\Gamma$-admissible if and only if $(G,G^\Gamma)$ is of holomorphic type.
\end{abstract}
\maketitle
\section{Introduction and main results}
Associated varieties are useful tools to study the discrete decomposability of the restrictions of unitarizable simple $(\mathfrak{g},K)$-modules. Let $G$ be a reductive Lie group with Lie algebra $\mathfrak{g}$, and $G'$ a reductive subgroup with subalgebra $\mathfrak{g}'$. Take a maximal compact subgroup $K$ of $G$ such that $K':=K\cap G'$ is a maximal compact subgroup of $G'$. Denote by $\mathfrak{g}_\mathbb{C}$ and $\mathfrak{g}'_\mathbb{C}$ the complexified Lie algebras of $\mathfrak{g}$ and $\mathfrak{g}'$ respectively. The inclusion $\mathfrak{g}'_\mathbb{C}\hookrightarrow\mathfrak{g}_\mathbb{C}$ gives a projection of the dual spaces $\mathrm{pr}_{\mathfrak{g}\rightarrow\mathfrak{g}'}:\mathfrak{g}_\mathbb{C}^*\twoheadrightarrow\mathfrak{g'}_\mathbb{C}^*$.

For a unitarizable simple $(\mathfrak{g},K)$-module $X$, denote by $\mathcal{V}_{\mathfrak{g}_\mathbb{C}}(X)$ the associated variety of $X$ in the dual space $\mathfrak{g}_\mathbb{C}^*$. It is known from \cite[Theorem 3.1]{Ko4} that, if $Y$ is an simple $(\mathfrak{g}',K')$-module such that $\mathrm{Hom}_{(\mathfrak{g}',K')}(Y,X)\neq\{0\}$, then $\mathrm{pr}_{\mathfrak{g}\rightarrow\mathfrak{g}'}\mathcal{V}_{\mathfrak{g}_\mathbb{C}}(X)\subseteq\mathcal{V}_{\mathfrak{g}'_\mathbb{C}}(Y)$. However, Toshiyuki KOBAYASHI conjectured in \cite[Conjecture 5.11]{Ko7} that the inclusion is actually an equality. Namely,
\begin{conjecture}\label{17}
Let $X$ be a unitarizable simple $(\mathfrak{g},K)$-module. If $Y$ is an simple $(\mathfrak{g}',K')$-module such that $\mathrm{Hom}_{(\mathfrak{g}',K')}(Y,X)\neq\{0\}$, then $\mathrm{pr}_{\mathfrak{g}\rightarrow\mathfrak{g}'}\mathcal{V}_{\mathfrak{g}_\mathbb{C}}(X)=\mathcal{V}_{\mathfrak{g}'_\mathbb{C}}(Y)$.
\end{conjecture}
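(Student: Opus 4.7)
The plan is to confirm \conjectureref{17} for $(G,G')=(\mathrm{E}_{6(-14)},\mathrm{Spin}(8,1))$ by exhibiting an explicit discrete series $X$ of $G$ which is $G'$-admissible, computing $\mathcal{V}_{\mathfrak{g}_\mathbb{C}}(X)$, decomposing the restriction $X|_{(\mathfrak{g}',K')}$ into irreducibles $Y$, and matching the two sides of the conjectural equality for each constituent. Since $\mathrm{E}_{6(-14)}$ is of Hermitian type, a holomorphic discrete series is the natural candidate: its associated variety is the closure of a single nilpotent $K_\mathbb{C}$-orbit in $\mathfrak{p}_+^*$, well understood via the Hermitian rank and the Enright--Howe--Wallach classification. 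The inclusion $\mathrm{pr}_{\mathfrak{g}\to\mathfrak{g}'}\mathcal{V}_{\mathfrak{g}_\mathbb{C}}(X)\subseteq\mathcal{V}_{\mathfrak{g}'_\mathbb{C}}(Y)$ is already furnished by \cite[Theorem~3.1]{Ko4}, so the real content to be proved is the reverse inclusion.

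First I would spell out the Klein four group $\Gamma=\{1,\sigma,\tau,\sigma\tau\}$ of commuting involutions of $\mathrm{E}_{6(-14)}$ whose simultaneous fixed point subgroup has identity component $\mathrm{Spin}(8,1)$, and record the resulting $\Gamma$-eigenspace decomposition of $\mathfrak{e}_{6,\mathbb{C}}$; the projection $\mathrm{pr}_{\mathfrak{g}\to\mathfrak{g}'}$ is then essentially the orthogonal projection onto the $\Gamma$-invariant component. Using Kobayashi's infinitesimal (``PP'') criterion for discrete decomposability, I would next single out a holomorphic discrete series $X$ whose restriction to $\mathrm{Spin}(8,1)$ is admissible, and identify $\mathcal{V}_{\mathfrak{g}_\mathbb{C}}(X)$ as the closure $\overline{\mathcal{O}}$ of a specific low-rank $K_\mathbb{C}$-orbit in $\mathfrak{p}_+^*$. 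On the $G'$-side, I would exploit the Klein four description together with a suitable branching law to present each irreducible summand $Y$ as a cohomologically induced module $A_\mathfrak{q}(\lambda)$ for a $\theta$-stable parabolic $\mathfrak{q}\subset\mathfrak{so}(8,1)_\mathbb{C}$, so that $\mathcal{V}_{\mathfrak{g}'_\mathbb{C}}(Y)$ is the closure of the $K'_\mathbb{C}$-orbit through $\mathfrak{u}\cap\mathfrak{p}'$ by the standard formula. The symmetric subchain $\mathrm{Spin}(8,1)\subset\mathrm{Spin}(8,2)$ alluded to in the abstract then provides an independent computation of $\mathcal{V}_{\mathfrak{g}'_\mathbb{C}}(Y)$ from the holomorphic side, serving as a cross-check and giving the alternative confirmation for $(\mathrm{Spin}(8,2),\mathrm{Spin}(8,1))$ promised in the abstract.

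The main obstacle will be the nilpotent orbit matching: verifying that $\mathrm{pr}_{\mathfrak{g}\to\mathfrak{g}'}(\overline{\mathcal{O}})$ coincides with the $K'_\mathbb{C}$-orbit closure attached to $Y$. I plan to handle this by working in an explicit matrix or Jordan-algebra model of $\mathfrak{e}_{6(-14)}$: pick a nilpotent representative $e\in\mathcal{O}$, restrict the linear functional $\langle e,\,\cdot\,\rangle$ to $\mathfrak{g}'_\mathbb{C}$, and identify the resulting nilpotent in $\mathfrak{so}(9,\mathbb{C})$ through its Jordan type or a Kostant--Sekiguchi $\mathfrak{sl}_2$-triple. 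Since $\mathcal{O}$ will be a low-rank orbit, this reduces to a finite, tractable calculation. Once the matching is established for this $X$, the ``if'' direction of the holomorphic-type dichotomy stated in the abstract follows at once, and the ``only if'' direction reduces to applying Kobayashi's necessary condition for discrete decomposability to each remaining Klein four pair in the exceptional Hermitian list and ruling out admissibility case by case.
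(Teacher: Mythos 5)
Your plan cannot get off the ground for this pair, because it is built on an object that does not exist. You propose to take a (holomorphic) discrete series $X$ of $\mathrm{E}_{6(-14)}$ whose restriction to $\mathrm{Spin}(8,1)$ is admissible. But $(\mathrm{E}_{6(-14)},\mathrm{Spin}(8,1))$ is a Klein four symmetric pair of \emph{non-holomorphic} type: the generating Klein four group $\Gamma$ contains an involution $\sigma$ with $\mathfrak{g}^\sigma\cong\mathfrak{f}_{4(-20)}$, and the pair $(\mathfrak{e}_{6(-14)},\mathfrak{f}_{4(-20)})$ is not of discrete series type in the Kobayashi--Oshima classification. Hence no $A_{\mathfrak{b}}(\lambda)$ for a $\theta$-stable Borel subalgebra --- i.e.\ no discrete series of $G$ --- is discretely decomposable over $\mathfrak{g}^\sigma$, and a $G^\Gamma$-admissible discrete series would force exactly that (this is the content of \lemmaref{8} and the proof of \theoremref{9} in this very paper). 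So the first step of your argument, selecting an admissible discrete series via the infinitesimal criterion, is impossible; relatedly, the associated variety of a holomorphic discrete series is all of $\mathfrak{p}_+$, not a small orbit closure. The module for which the conjecture is actually verified here is the \emph{minimal holomorphic representation} $L(3\omega_6)$, a singular unitarizable lowest weight module that is not a discrete series.

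The paper's route also avoids all of the explicit nilpotent-orbit matching you anticipate as the ``main obstacle.'' It factors the projection through the intermediate Hermitian subalgebra $\mathfrak{h}=\mathfrak{so}(8,2)\subseteq\mathfrak{g}^\tau$: the branching $L(3\omega_6)|_{\mathfrak{h}}\cong\bigoplus_{k\geq0}L'(3\mu_1+k\mu_5)$ is known from M\"ollers--Oshima, their Theorem~7.4 (holomorphic embedding) gives $\mathrm{pr}_{\mathfrak{g}\rightarrow\mathfrak{h}}\mathcal{V}_{\mathfrak{g}_\mathbb{C}}(L(3\omega_6))=\mathcal{V}_{\mathfrak{h}_\mathbb{C}}(L'(3\mu_1))$, Sepp\"anen's theorem says $L'(3\mu_1)$ stays irreducible as a $(\mathfrak{g}^\Gamma,K^\Gamma)$-module, and their Theorem~7.6 (minimal holomorphic representation restricted to a symmetric pair) gives $\mathrm{pr}_{\mathfrak{h}\rightarrow\mathfrak{g}^\Gamma}\mathcal{V}_{\mathfrak{h}_\mathbb{C}}(L'(3\mu_1))=\mathcal{V}_{\mathfrak{g}^\Gamma_\mathbb{C}}(L'(3\mu_1))$. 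Composing the two projections and invoking \propositionref{16}(2) (all simple constituents of a restriction share one associated variety) finishes the proof with no case-by-case orbit computation. Your proposed use of the chain $\mathrm{Spin}(8,1)\subset\mathrm{Spin}(8,2)$ as a ``cross-check'' is close in spirit to this, but in the paper that chain \emph{is} the proof, and the logical direction for the symmetric pair $(\mathfrak{so}(8,2),\mathfrak{so}(8,1))$ in \corollaryref{20} runs from the Klein four pair downward, not the other way around.
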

For convenience, if $\mathfrak{g}$, $\mathfrak{g}'$, and $X$ satisfy \conjectureref{17}, the author will say that \conjectureref{17} is true for the triple $(\mathfrak{g},\mathfrak{g}',X)$. \conjectureref{17} is true for $(\mathfrak{g},\mathfrak{g}',X)$ in the following cases:
\begin{enumerate}[$\bullet$]
\item $X$ is a generalized Verma module and $(\mathfrak{g},\mathfrak{g}')$ is a symmetric pair (\cite[Theorem 4.12]{Ko6});
\item $X$ is the underlying $(\mathfrak{g},K)$-module of the minimal representation of $\mathrm{O}(p,q)$ with $p+q$ even and $(\mathfrak{g},\mathfrak{g}')$ is a symmetric pair (\cite{KO});
\item $X=A_\mathfrak{q}(\lambda)$ and $(\mathfrak{g},\mathfrak{g}')$ is a symmetric pair (\cite[Theorem 8.5]{O});
\item $X$ is a highest / lowest weight simple $(\mathfrak{g},K)$-module and the natural embedding $G'/K'\hookrightarrow G/K$ is holomorphic (\cite[Theorem 7.4]{MO});
\item $X$ is the minimal holomorphic representation and $(\mathfrak{g},\mathfrak{g}')$ is a symmetric pair (\cite[Theorem 7.6]{MO}).
\end{enumerate}
One notices that most of the verifications for the \conjectureref{17} were done for symmetric pairs. In this article, the author will confirm \conjectureref{17} for Klein four symmetric pairs.
\begin{definition}\label{6}
Let $G$ (respectively, $\mathfrak{g}$) be a real simple Lie group (respectively, Lie algebra), and $\Gamma$ a Klein four subgroup of the automorphism group $\mathrm{Aut}G$ (respectively, $\mathrm{Aut}\mathfrak{g}$). Denote by $G^\Gamma$ (respectively, $\mathfrak{g}^\Gamma$) the subgroup (respectively, subalgebra) of the fixed points under the action of all elements in $\Gamma$ on $G$ (respectively, $\mathfrak{g}$). Then $(G,G^\Gamma)$ (respectively, $(\mathfrak{g},\mathfrak{g}^\Gamma)$) is called a Klein four symmetric pair. In particular, if $G$ is a simple Lie group of Hermitian type and every nonidentity element $\sigma\in\Gamma$ defines a symmetric pair of holomorphic type, then $(G,G^\Gamma)$ (respectively, $(\mathfrak{g},\mathfrak{g}^\Gamma)$) is called a Klein four symmetric pair of holomorphic type.
\end{definition}
The discrete branching laws for Klein four symmetric pairs were studied in \cite{H1}, \cite{H2}, \cite{H3}, and \cite{H4}. When $G$ is an exceptional Lie group of Hermitian type, the Klein four symmetric pairs $(G,G^\Gamma)$ of holomorphic type were classified. In this case, because of the fourth bullet item listed above, \conjectureref{17} is automatically true for $(\mathfrak{g},\mathfrak{g}^\Gamma,X)$, where $X$ is any highest / lowest weight simple $(\mathfrak{g},K)$-module. As for Klein four symmetric pairs of non-holomorphic type, $(G,G^\Gamma)=(\mathrm{E}_{6(-14)},\mathrm{Spin}(8,1))$ is a very special Klein four symmetric pair for exceptional Lie group $G$ of Hermitian type in the sense of \cite[Theorem 16]{H3}. Now the author may state the first main result of this article.
\begin{theorem}\label{19}
Let $X$ be the minimal holomorphic representation of $\mathfrak{e}_{6(-14)}$. Then \conjectureref{17} is true for the triple $(\mathfrak{e}_{6(-14)},\mathfrak{so}(8,1),X)$.
\end{theorem}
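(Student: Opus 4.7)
The approach is to factor the restriction through an intermediate symmetric subalgebra. Write $\mathfrak{g}:=\mathfrak{e}_{6(-14)}$, $\mathfrak{g}':=\mathfrak{so}(8,1)$, and set $\mathfrak{g}'':=\mathfrak{f}_{4(-20)}$. Both $(\mathfrak{g},\mathfrak{g}'')$ and $(\mathfrak{g}'',\mathfrak{g}')$ appear in Berger's classification of symmetric pairs, and $\mathfrak{g}''$ arises as the fixed subalgebra of one of the three nonidentity elements of the Klein four group $\Gamma$ with $\mathfrak{g}^\Gamma=\mathfrak{g}'$. Since $\mathrm{pr}_{\mathfrak{g}\to\mathfrak{g}'}=\mathrm{pr}_{\mathfrak{g}''\to\mathfrak{g}'}\circ\mathrm{pr}_{\mathfrak{g}\to\mathfrak{g}''}$, it is enough to verify \conjectureref{17} at the two stages $(\mathfrak{g},\mathfrak{g}'',X)$ and $(\mathfrak{g}'',\mathfrak{g}',Y_j)$, where $Y_j$ ranges over the discrete summands of $X|_{\mathrm{F}_{4(-20)}}$.

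For Stage one, $(\mathfrak{g},\mathfrak{g}'')$ is a symmetric pair and $X$ is the minimal holomorphic representation, so the case of \conjectureref{17} already established in \cite[Theorem 7.6]{MO} applies directly and gives $\mathrm{pr}_{\mathfrak{g}\to\mathfrak{g}''}\mathcal{V}_{\mathfrak{g}_\mathbb{C}}(X)=\mathcal{V}_{\mathfrak{g}''_\mathbb{C}}(Y_j)$ for every simple $(\mathfrak{g}'',K'')$-module $Y_j$ with $\mathrm{Hom}_{(\mathfrak{g}'',K'')}(Y_j,X)\neq\{0\}$. Kobayashi's admissibility criterion together with the branching formulas developed in \cite{H3} provide an explicit irreducible decomposition $X|_{\mathrm{F}_{4(-20)}}\cong\bigoplus_jY_j$ needed below.

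For Stage two, I would realize each summand $Y_j$ as a cohomologically induced module $A_{\mathfrak{q}_j}(\lambda_j)$ of $\mathrm{F}_{4(-20)}$ by matching its minimal $\mathrm{Spin}(9)$-type and infinitesimal character against a $\theta$-stable parabolic $\mathfrak{q}_j\subset\mathfrak{g}''_\mathbb{C}$. Once this identification is in place, \cite[Theorem 8.5]{O} applied to the symmetric pair $(\mathfrak{g}'',\mathfrak{g}')$ yields $\mathrm{pr}_{\mathfrak{g}''\to\mathfrak{g}'}\mathcal{V}_{\mathfrak{g}''_\mathbb{C}}(Y_j)=\mathcal{V}_{\mathfrak{g}'_\mathbb{C}}(Y)$ for every simple $(\mathfrak{g}',K')$-module $Y$ with $\mathrm{Hom}_{(\mathfrak{g}',K')}(Y,Y_j)\neq\{0\}$.

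To combine the two stages, let $Y$ be an arbitrary simple $(\mathfrak{g}',K')$-module with $\mathrm{Hom}_{(\mathfrak{g}',K')}(Y,X)\neq\{0\}$; composing a nonzero intertwiner $Y\to X$ with the $(\mathfrak{g}'',K'')$-equivariant projection of $X$ onto some summand $Y_j$ remains nonzero by irreducibility of $Y$, and hence $\mathrm{Hom}_{(\mathfrak{g}',K')}(Y,Y_j)\neq\{0\}$ for that $j$. Transitivity of $\mathrm{pr}_{\mathfrak{g}\to\mathfrak{g}'}$ then delivers the desired equality. The main obstacle is precisely Stage two: realizing each $Y_j$ explicitly enough as an $A_\mathfrak{q}(\lambda)$-module of $\mathrm{F}_{4(-20)}$ for \cite[Theorem 8.5]{O} to apply. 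Once \theoremref{19} is established, an entirely analogous factorization through $\mathfrak{so}(8,2)$ yields the alternative confirmation of \conjectureref{17} for $(\mathrm{Spin}(8,2),\mathrm{Spin}(8,1))$ mentioned in the abstract.
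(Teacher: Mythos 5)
Your overall strategy---factor the restriction through an intermediate symmetric subalgebra, verify \conjectureref{17} at each stage, and glue with transitivity of $\mathrm{pr}$ and \propositionref{16}(2)---is exactly the paper's strategy, and your gluing step at the end is sound. But you route the argument through $\mathfrak{g}''=\mathfrak{f}_{4(-20)}$, and this is where the proof breaks down, at the point you yourself flag as ``the main obstacle.'' Stage two requires (i) an explicit irreducible decomposition of $X|_{\mathrm{F}_{4(-20)}}$ and (ii) a realization of each summand $Y_j$ as an $A_{\mathfrak{q}_j}(\lambda_j)$ in the range where \cite[Theorem 8.5]{O} applies. Neither is available: \cite{H3} gives discrete decomposability criteria for Klein four symmetric pairs, not explicit branching formulas for $(\mathrm{E}_{6(-14)},\mathrm{F}_{4(-20)})$; and since $X$ is the (highly singular) minimal holomorphic representation and $\mathrm{F}_{4(-20)}$ is not of Hermitian type, there is no reason its $\mathrm{F}_{4(-20)}$-constituents should be cohomologically induced from $\theta$-stable parabolics with parameters covered by Oshima's theorem. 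As written, Stage two is a conjecture, not a proof.

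The paper avoids this entirely by choosing the other nonidentity involution: it factors through $\mathfrak{h}=\mathfrak{so}(8,2)$ (the noncompact summand of $\mathfrak{g}^\tau\cong\mathfrak{so}(8,2)\oplus\mathfrak{so}(2)$), which is of holomorphic type in $\mathfrak{e}_{6(-14)}$. Then the branching law $L(3\omega_6)\cong\bigoplus_{k\geq0}L'(3\mu_1+k\mu_5)$ is explicitly known from \cite{MO}, Stage one is \cite[Theorem 7.4]{MO} (holomorphic embedding), and Stage two is handled by observing that the bottom summand $L'(3\mu_1)$ is itself the \emph{minimal holomorphic representation} of $\mathfrak{so}(8,2)$, so \cite[Theorem 7.6]{MO} applies again to the symmetric pair $(\mathfrak{so}(8,2),\mathfrak{so}(8,1))$; Sepp\"{a}nen's theorem \cite[Theorem 19]{S} that $L'(3\mu_1)$ stays irreducible over $\mathfrak{so}(8,1)$ pins down the relevant simple $(\mathfrak{g}^\Gamma,K^\Gamma)$-module, and \propositionref{16}(2) transfers the equality to every other simple summand. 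Note also that your closing remark has the logic of the paper inverted: the factorization through $\mathfrak{so}(8,2)$ is not an ``analogous'' afterthought for the pair $(\mathrm{Spin}(8,2),\mathrm{Spin}(8,1))$---it is the proof of \theoremref{19} itself, and the statement for $(\mathfrak{so}(8,2),\mathfrak{so}(8,1))$ is then \emph{deduced} from \theoremref{19} via the paper's Lemma 4.3. If you want to salvage your route through $\mathfrak{f}_{4(-20)}$, you would need to supply the missing branching law and module identifications, which is a substantially harder problem than the one being solved.
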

Moreover, for $(\mathfrak{g},\mathfrak{g}^\Gamma)=(\mathfrak{e}_{6(-14)},\mathfrak{so}(8,1))$, it is known from \cite[Lemma 12]{H3} that there exists an involution $\sigma\in\Gamma$ such that $(\mathfrak{g},\mathfrak{g}^\sigma)=(\mathfrak{e}_{6(-14)},\mathfrak{so}(8,2)\oplus\mathfrak{so}(2))$. Then the first direct summand $\mathfrak{so}(8,2)$ together with $\mathfrak{so}(8,1)$ forms another symmetric pair $(\mathfrak{so}(8,2),\mathfrak{so}(8,1))$. The author will confirm \conjectureref{17} for the symmetric pair $(\mathfrak{so}(8,2),\mathfrak{so}(8,1))$ with a series of unitarizable simple $(\mathfrak{so}(8,2),\mathrm{Spin}(8)\times\mathrm{Spin}(2))$-modules. Although this result was included in \cite[Theorem 4.12]{Ko6}, this provides a new way to study discrete branching laws and \conjectureref{17} for symmetric pairs by means of Klein four symmetric pairs.

In previews articles involving the discrete branching laws for Klein four symmetric pairs, the author mainly discuss the discrete decomposability of the restrictions of $(\mathfrak{g},K)$-modules. In the final part of this article, the author will discuss, for Klein four symmetric pairs $(G,G^\Gamma)$ with exceptional simple Lie groups $G$ of Hermitian type, $G^\Gamma$-admissibility of the restrictions of discrete series representations of $G$. Thus, here is the second result of this article.
\begin{theorem}\label{9}
Let $G$ be an exceptional simple Lie group of Hermitian type. If $(G,G^\Gamma)$ is a Klein four symmetric pair of non-holomorphic type, then there is no discrete series representation of $G$ which is $G^\Gamma$-admissible.
\end{theorem}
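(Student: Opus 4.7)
The plan is to combine the classification of Klein four symmetric pairs $(G,G^\Gamma)$ of non-holomorphic type for exceptional Hermitian $G$, established in the author's previous works \cite{H1,H2,H3,H4}, with Kobayashi's criterion for discrete decomposability. Since the only exceptional simple Lie groups of Hermitian type are $\mathrm{E}_{6(-14)}$ and $\mathrm{E}_{7(-25)}$, and the Klein four symmetric pairs of non-holomorphic type for these $G$ form a short finite list up to conjugacy, the theorem will reduce to a finite case check.

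The central tool is Kobayashi's theorem that, for a discrete series representation $\pi_\lambda$ of $G$ with Harish-Chandra parameter $\lambda$, the restriction $\pi_\lambda|_{G^\Gamma}$ is $G^\Gamma$-admissible if and only if $\pi_\lambda|_{K^\Gamma}$ is $K^\Gamma$-admissible, equivalently,
\[
AS_K(\pi_\lambda)\cap\mathrm{Ad}^*(K)\bigl(\sqrt{-1}(\mathfrak{k}^\Gamma)^{\perp}\bigr)=\{0\}
\]
inside $\sqrt{-1}\mathfrak{t}^*$, where $\mathfrak{t}\subseteq\mathfrak{k}$ is a compact Cartan subalgebra and $AS_K(\pi_\lambda)$ is the asymptotic $K$-support. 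For discrete series the cone $AS_K(\pi_\lambda)$ is computable from the Blattner formula in terms of $\lambda+\rho_n-\rho_c$ and the compact Weyl group $W_K$, so the criterion becomes an explicit statement about root system data.

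For each non-holomorphic pair in the list, I would split the discrete series into holomorphic, antiholomorphic, and \emph{middle} families. For the (anti)holomorphic families non-admissibility is immediate: $G^\Gamma$-admissibility of a holomorphic discrete series would, via the classical Kobayashi result, force every involutive subpair $(G,G^\sigma)$ with $\sigma\in\Gamma\setminus\{1\}$ to be of holomorphic type, contradicting the non-holomorphic type hypothesis on $\Gamma$. For the middle discrete series the task is to exhibit a concrete nonzero vector in $AS_K(\pi_\lambda)\cap\mathrm{Ad}^*(K)\bigl(\sqrt{-1}(\mathfrak{k}^\Gamma)^{\perp}\bigr)$; this is done by identifying $\mathfrak{k}^\Gamma$ explicitly as a subalgebra of $\mathfrak{k}$ using the classification in \cite{H3} (for instance, for $(\mathrm{E}_{6(-14)},\mathrm{Spin}(8,1))$ this puts $\mathfrak{k}^\Gamma=\mathfrak{so}(7)$ inside $\mathfrak{k}=\mathfrak{so}(10)\oplus\mathfrak{so}(2)$), computing $(\mathfrak{k}^\Gamma)^\perp$ inside $\sqrt{-1}\mathfrak{t}^*$, and intersecting its $K$-saturation with the cone generated by the Blattner parameter.

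The main obstacle is the middle discrete series case, which requires root system bookkeeping specific to $\mathrm{E}_6$ and $\mathrm{E}_7$. The argument is uniform in spirit — in each non-holomorphic pair $\mathfrak{k}^\Gamma$ has strictly smaller rank than $\mathfrak{k}$, so $\mathrm{Ad}^*(K)\bigl(\sqrt{-1}(\mathfrak{k}^\Gamma)^{\perp}\bigr)$ is a union of hyperplanes that meets every $W_K$-translate of the dominant chamber — but the verification that this intersection actually hits $AS_K(\pi_\lambda)$ for \emph{every} admissible $\lambda$ must be carried out pair by pair, using the explicit description of the noncompact roots of $\mathrm{E}_6$ and $\mathrm{E}_7$ that are orthogonal to the fixed subalgebra $\mathfrak{k}^\Gamma$.
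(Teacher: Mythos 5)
There is a genuine gap: your proposal is a strategy outline whose decisive step is never carried out. The entire content of the theorem for the ``middle'' discrete series is the assertion that $AS_K(\pi_\lambda)\cap\mathrm{Ad}^*(K)\bigl(\sqrt{-1}(\mathfrak{k}^\Gamma)^{\perp}\bigr)\neq\{0\}$ for every Harish-Chandra parameter $\lambda$ and every non-holomorphic pair, and you defer exactly this to unexecuted ``root system bookkeeping.'' Moreover, the one structural claim you offer in its place --- that since $\mathrm{rank}\,\mathfrak{k}^\Gamma<\mathrm{rank}\,\mathfrak{k}$ the $K$-saturation of $(\mathfrak{k}^\Gamma)^\perp$ meets every $W_K$-translate of the dominant chamber --- does not suffice: $AS_K(\pi_\lambda)$ is in general a proper closed subcone, and meeting a chamber does not force a nonzero intersection with that subcone. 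You would also need the \emph{necessity} direction of Kobayashi's asymptotic-cone criterion (admissible $\Rightarrow$ trivial intersection), which is more delicate than the sufficiency you implicitly quote. Finally, be careful with the holomorphic family: while your claim is correct for holomorphic \emph{discrete series}, more singular highest weight modules can be discretely decomposable over anti-holomorphic symmetric subalgebras (the minimal holomorphic representation of $\mathrm{E}_{6(-14)}$ restricted to $\mathrm{F}_{4(-20)}$ is exactly such an example, cf.\ \cite{KO2}), so the ``classical Kobayashi result'' must be invoked in its precise form.

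The paper avoids all of this case-by-case analysis. Since every involution commuting with the Cartan involution acts on the one-dimensional center of $\mathfrak{k}$ by $\pm1$, non-holomorphic type forces some nonidentity $\sigma\in\Gamma$ to be of \emph{anti-holomorphic} type. By \cite[Theorem 1.2]{Ko2} together with \propositionref{4}, $G^\Gamma$-admissibility of $\pi$ implies that $\pi_K$ is discretely decomposable as a $(\mathfrak{g}^\sigma,K^\sigma)$-module. The classification \cite[Theorem 5.2 \& Table 1]{KO2} leaves $(\mathfrak{e}_{6(-14)},\mathfrak{f}_{4(-20)})$ as the only anti-holomorphic candidate admitting any discretely decomposable restriction, and \cite[Table C.3]{KO1} shows that no $A_\mathfrak{b}(\lambda)$ with $\mathfrak{b}$ a $\theta$-stable Borel subalgebra --- hence no discrete series --- is discretely decomposable for that pair. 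This uniform reduction to existing classification tables is the idea your proposal is missing; if you wish to pursue the asymptotic-support route instead, you must actually produce the nonzero intersection vectors for each pair and each translated Weyl chamber of $\mathrm{E}_6$ and $\mathrm{E}_7$.
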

The following corollary follows from \theoremref{9} immediately.
\begin{corollary}\label{10}
Let $G$ be an exceptional simple Lie group of Hermitian type, and $(G,G^\Gamma)$ a Klein four symmetric pair. Then there exists a discrete series representation $\pi$ of $G$ which is $G^\Gamma$-admissible, if and only if $(G,G^\Gamma)$ is of holomorphic type.
\end{corollary}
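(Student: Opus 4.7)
The plan is to prove the two directions of the biconditional separately, each reducing to a previously established result. The ``only if'' direction is the immediate contrapositive of \theoremref{9}: if some discrete series representation $\pi$ of $G$ is $G^\Gamma$-admissible, then $(G,G^\Gamma)$ cannot be of non-holomorphic type, so by \definitionref{6} it must be of holomorphic type.

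For the ``if'' direction, I would first observe that when $(G,G^\Gamma)$ is of holomorphic type, each nonidentity $\sigma\in\Gamma$ preserves the complex structure on the Hermitian symmetric space $G/K$. Choosing two distinct nonidentity involutions $\sigma_1,\sigma_2\in\Gamma$, the fixed locus $G^{\sigma_1}/K^{\sigma_1}$ is a complex submanifold of $G/K$, the restriction of $\sigma_2$ is again a holomorphic involution of it, and iterating yields that the natural embedding $G^\Gamma/K^\Gamma\hookrightarrow G/K$ is holomorphic. Classical results on restrictions of holomorphic discrete series along holomorphic subgroup embeddings (the Jakobsen--Vergne / Martens circle of ideas underlying the fourth bullet item after \conjectureref{17}, namely \cite[Theorem 7.4]{MO}) then supply an abundance of holomorphic discrete series $\pi$ of $G$ that are $G^\Gamma$-admissible; concretely, any holomorphic discrete series with sufficiently regular Harish-Chandra parameter works, since its $K$-types lie in a single Weyl chamber and hence branch with finite multiplicity under $K^\Gamma$, which upgrades to $G^\Gamma$-admissibility once discrete decomposability is in place.

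The main step beyond citing the existing literature is the verification of the holomorphicity of the intermediate embeddings $G/K\supseteq G^{\sigma_1}/K^{\sigma_1}\supseteq G^\Gamma/K^\Gamma$. This is not a genuine obstacle: it reduces to the elementary observation that two commuting holomorphic involutions on a Hermitian symmetric space have a common fixed-point locus which is itself a complex submanifold, verified at the Lie-algebra level by intersecting the $\pm1$-eigenspaces of $\sigma_1$ and $\sigma_2$ on the holomorphic tangent space $\mathfrak{p}_+$.
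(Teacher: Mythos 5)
Your proposal is correct and follows essentially the same route as the paper: the ``only if'' direction is exactly the appeal to \theoremref{9}, and the ``if'' direction rests on the standard fact that holomorphic (or anti-holomorphic) discrete series restrict $G^\Gamma$-admissibly when the pair is of holomorphic type. The only difference is that the paper simply cites this fact, whereas you sketch its proof via the holomorphicity of $G^\Gamma/K^\Gamma\hookrightarrow G/K$; note that no regularity of the Harish-Chandra parameter is needed there, since admissibility already follows from admissibility with respect to the central torus of $K$, which lies in $K^\Gamma$.
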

\begin{proof}
The conclusion follows from \theoremref{9}, and the fact that any holomorphic / anti-holomorphic discrete series of $G$ is $G^\Gamma$-admissible.
\end{proof}
\section{Preliminary on associated varieties}
Let $\mathfrak{g}$ be a reductive Lie algebra with its complexification $\mathfrak{g}_\mathbb{C}$, and let $\{U_j(\mathfrak{g}_\mathbb{C})\}_{j\in\mathbb{Z}_{\geq0}}$ be the standard increasing filtration of the universal enveloping algebra $U(\mathfrak{g}_\mathbb{C})$. Suppose that $X$ is a finitely generated $\mathfrak{g}_\mathbb{C}$-module. A filtration $X=\bigcup\limits_{i\in\mathbb{Z}_{\geq0}}X_i$ is called a good filtration if it satisfies the following conditions:
\begin{enumerate}[$\bullet$]
\item $X_i$ is finite dimensional for any $i\in\mathbb{Z}_{\geq0}$;
\item $U_j(\mathfrak{g}_\mathbb{C})X_i\subseteq X_{i+j}$ for any $i,j\in\mathbb{Z}_{\geq0}$;
\item there exists $n\in\mathbb{Z}_{\geq0}$ such that $U_j(\mathfrak{g}_\mathbb{C})X_i=X_{i+j}$ for any $i\geq n$ and $j\in\mathbb{Z}_{\geq0}$.
\end{enumerate}
The graded algebra $\mathrm{gr}U(\mathfrak{g}_\mathbb{C}):=\bigoplus\limits_{j\in\mathbb{Z}_{\geq0}}U_j(\mathfrak{g}_\mathbb{C})/U_{j-1}(\mathfrak{g}_\mathbb{C})$ is isomorphic to the symmetric algebra $S(\mathfrak{g}_\mathbb{C})$ by the Poincar\'{e}-Birkhoff-Witt theorem and one may regard the graded module $\mathrm{gr}X:=\bigoplus\limits_{i\in\mathbb{Z}_{\geq0}}X_i/X_{i-1}$ as an $S(\mathfrak{g}_\mathbb{C})$-module. Let $\mathrm{Ann}_{S(\mathfrak{g}_\mathbb{C})}(\mathrm{gr}X):=\{f\in S(\mathfrak{g}_\mathbb{C})\mid fv=0\textrm{ for any }v\in\mathrm{gr}X\}$ and define\[\mathcal{V}_{\mathfrak{g}_\mathbb{C}}(X):=\{x\in\mathfrak{g}_\mathbb{C}^*\mid f(x)=0\textrm{ for any }f\in\mathrm{Ann}_{S(\mathfrak{g}_\mathbb{C})}(\mathrm{gr}X)\}\]which does not depend on the choice of good filtration. Then $\mathcal{V}_{\mathfrak{g}_\mathbb{C}}(X)$ is called the associated variety of $X$.

Let $\mathfrak{g}'$ be a reductive subalgebra of $\mathfrak{g}$, and then the inclusion $\mathfrak{g}'\hookrightarrow\mathfrak{g}$ gives a projection of the complexified dual spaces $\mathrm{pr}_{\mathfrak{g}\rightarrow\mathfrak{g}'}:\mathfrak{g}_\mathbb{C}^*\twoheadrightarrow\mathfrak{g'}_\mathbb{C}^*$.
\begin{proposition}\label{16}
let $X$ be an simple $(\mathfrak{g},K)$-module.
\begin{enumerate}[(1)]
\item If $Y$ is an simple $(\mathfrak{g}',K')$-module such that $\mathrm{Hom}_{(\mathfrak{g}',K')}(Y,X)\neq\{0\}$, then $\mathrm{pr}_{\mathfrak{g}\rightarrow\mathfrak{g}'}\mathcal{V}_{\mathfrak{g}_\mathbb{C}}(X)\subseteq\mathcal{V}_{\mathfrak{g}'_\mathbb{C}}(Y)$.
\item If $Y_i$ are simple $(\mathfrak{g}',K')$-modules such that $\mathrm{Hom}_{(\mathfrak{g}',K')}(Y_i,X)\neq\{0\}$ for $i=1,2$, then $\mathcal{V}_{\mathfrak{g}'_\mathbb{C}}(Y_1)=\mathcal{V}_{\mathfrak{g}'_\mathbb{C}}(Y_2)$.
\end{enumerate}
\end{proposition}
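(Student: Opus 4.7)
Statement (1) is essentially \cite[Theorem 3.1]{Ko4}, which I would cite directly while sketching its argument. Using simplicity of $Y$, pick any nonzero $v\in Y$ and form the good filtration $Y_n:=U_n(\mathfrak{g}'_{\mathbb{C}})v$ of $Y$ as a $(\mathfrak{g}',K')$-module. Via the embedding $Y\hookrightarrow X$ arising from a nonzero element of $\mathrm{Hom}_{(\mathfrak{g}',K')}(Y,X)$, and a chosen good filtration $\{X_m\}$ of $X$ with $v\in X_N$, one obtains $Y_n\subseteq X_{N+n}$. This compatibility lets one compare the annihilator ideal of $\mathrm{gr}\,Y$ in $S(\mathfrak{g}'_{\mathbb{C}})$ with the annihilator of $\mathrm{gr}\,X$ in $S(\mathfrak{g}_{\mathbb{C}})$, using that $S(\mathfrak{g}_{\mathbb{C}})\cong\mathbb{C}[\mathfrak{g}_{\mathbb{C}}^*]$ and $\mathrm{pr}_{\mathfrak{g}\to\mathfrak{g}'}$ corresponds to the restriction-of-polynomials map dual to $\mathfrak{g}'_{\mathbb{C}}\hookrightarrow\mathfrak{g}_{\mathbb{C}}$; the resulting containment of annihilators translates into the claimed inclusion of varieties.

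For statement (2), my plan is to first apply (1) to each of $Y_1$ and $Y_2$, yielding the common lower bound $\mathrm{pr}_{\mathfrak{g}\to\mathfrak{g}'}\mathcal{V}_{\mathfrak{g}_{\mathbb{C}}}(X)\subseteq\mathcal{V}_{\mathfrak{g}'_{\mathbb{C}}}(Y_i)$ for $i=1,2$. To promote these two inclusions to the equality $\mathcal{V}_{\mathfrak{g}'_{\mathbb{C}}}(Y_1)=\mathcal{V}_{\mathfrak{g}'_{\mathbb{C}}}(Y_2)$, I would exploit that both $Y_i$ are $(\mathfrak{g}',K')$-submodules of the same unitarizable $(\mathfrak{g},K)$-module $X$: they share the same annihilator contribution coming from $\mathrm{Ann}_{U(\mathfrak{g}'_{\mathbb{C}})}(X|_{\mathfrak{g}'})\subseteq\mathrm{Ann}_{U(\mathfrak{g}'_{\mathbb{C}})}(Y_i)$. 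Invoking the fact, due to Vogan, that the associated variety of a simple Harish-Chandra module is an equidimensional union of $K'_{\mathbb{C}}$-orbit closures pinned down by the annihilator ideal, one forces $\mathcal{V}_{\mathfrak{g}'_{\mathbb{C}}}(Y_1)$ and $\mathcal{V}_{\mathfrak{g}'_{\mathbb{C}}}(Y_2)$ to coincide.

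The main obstacle lies in this last step of (2): a priori two simple $(\mathfrak{g}',K')$-modules with overlapping annihilators need not share the same associated variety, and a naive comparison via the induced filtration of $X$ restricted to each $Y_i$ fails because such restricted filtrations are generally not good. The critical technical ingredient is therefore to use the actual embedding data $\mathrm{Hom}_{(\mathfrak{g}',K')}(Y_i,X)\neq\{0\}$ (not merely the ideal-theoretic shadow) together with Vogan's equidimensionality theorem, leveraging unitarity of $X$ to ensure that $X|_{\mathfrak{g}'}$ decomposes compatibly with its simple submodules. Once this bridge is in place, (2) follows quickly from the double application of (1).
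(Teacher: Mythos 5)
The paper's own ``proof'' of this proposition is a bare citation: part (1) is \cite[Theorem 3.1]{Ko4} and part (2) is \cite[Theorem 3.7]{Ko4}. Your treatment of (1) is consistent with that: you cite the same theorem and your sketch (good filtration $Y_n=U_n(\mathfrak{g}'_{\mathbb{C}})v$, compatibility $Y_n\subseteq X_{N+n}$, comparison of annihilators under the restriction-of-polynomials map dual to $\mathfrak{g}'_{\mathbb{C}}\hookrightarrow\mathfrak{g}_{\mathbb{C}}$) is the standard argument behind it, so (1) is fine.

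For (2), however, you depart from the paper --- which simply invokes \cite[Theorem 3.7]{Ko4} --- and your substitute argument has a genuine gap, one you partly acknowledge yourself. The two inclusions $\mathrm{pr}_{\mathfrak{g}\rightarrow\mathfrak{g}'}\mathcal{V}_{\mathfrak{g}_{\mathbb{C}}}(X)\subseteq\mathcal{V}_{\mathfrak{g}'_{\mathbb{C}}}(Y_i)$ give only a common \emph{lower} bound, and the containment $\mathrm{Ann}_{U(\mathfrak{g}'_{\mathbb{C}})}(X|_{\mathfrak{g}'})\subseteq\mathrm{Ann}_{U(\mathfrak{g}'_{\mathbb{C}})}(Y_i)$ gives only a common \emph{upper} bound on the varieties; neither, even combined with Vogan's equidimensionality theorem, pins down $\mathcal{V}_{\mathfrak{g}'_{\mathbb{C}}}(Y_i)$. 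Equidimensionality says each associated variety is a union of $K'_{\mathbb{C}}$-orbit closures of equal dimension, but two simple Harish-Chandra modules with the same (or comparable) annihilator ideals can still have different associated varieties --- e.g.\ a holomorphic and an antiholomorphic discrete series, whose associated varieties are $\mathfrak{p}_+$ and $\mathfrak{p}_-$ respectively. So the ``bridge'' you describe is not actually built: the step from overlapping annihilators to equality of varieties is precisely the content of Kobayashi's Theorem 3.7, whose proof exploits the discrete decomposability of $X|_{\mathfrak{g}'}$ (via unitarity, each $Y_i$ is both a submodule and a quotient of $X$, which allows one to transport good filtrations in both directions), not an ideal-theoretic or equidimensionality argument. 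Either cite \cite[Theorem 3.7]{Ko4} as the paper does, or reproduce that two-sided filtration comparison; as written, your proof of (2) does not close.
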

\begin{proof}
See \cite[Theorem 3.1]{Ko4} for (1) and \cite[Theorem 3.7]{Ko4} for (2).
\end{proof}
As the preparation for the next section, the author recalls the constructions of the lowest weight modules $L(\lambda)$.

Suppose that $G$ is a simple Lie group of Hermitian type, and then the Lie algebra $\mathfrak{k}$ of $K$ has a $1$-dimensional center $Z(\mathfrak{k})$. A maximal toral subalgebra $\mathfrak{t}$ of $\mathfrak{k}$ becomes a Cartan subalgebra of $\mathfrak{g}$. Moreover, there exists a characteristic element $Z\in Z(\mathfrak{k})$ such that $\mathfrak{g}_\mathbb{C}=\mathfrak{k}_\mathbb{C}+\mathfrak{p}_++\mathfrak{p}_-$ is a decomposition with respect to the eigenspaces of $Z$ on $\mathfrak{g}_\mathbb{C}$ corresponding to the eigenvalues 0, $\sqrt{-1}$, and $-\sqrt{-1}$ respectively.

Suppose that $X$ is a simple $(\mathfrak{g},K)$-module, and then set $X^{\mathfrak{p}_-}=\{v\in X\mid Yv=0\textrm{ for any }Y\in\mathfrak{p}^-\}$. Since $K$ normalizes $\mathfrak{p}_-$, $X^{\mathfrak{p}_-}$ is a $K$-submodule. Further, $X^{\mathfrak{p}_-}$ is either zero or an irreducible finite-dimensional representation of $K$. A $(\mathfrak{g},K)$-module $X$ is called a lowest weight module if $X^{\mathfrak{p}_-}\neq\{0\}$. Any highest weight simple $(\mathfrak{g},K)$-module is constructed as follows. Denote by $F(\lambda)$ the irreducible representation of $K$ with the highest weight $\lambda$. Let $\mathfrak{p}_+$ act as zero on $F(\lambda)$ and the generalized Verma module $M(\lambda)=U(\mathfrak{g}_\mathbb{C})\otimes_{U(\mathfrak{k}_\mathbb{C}+\mathfrak{p}_-)}F(\lambda)$ is a $(\mathfrak{g},K)$-module. Then the unique simple quotient $L(\lambda)$ of $M(\lambda)$ is a lowest weight simple $(\mathfrak{g},K)$-module.
\section{Proof for \theoremref{19}}
In this section, for the time being, let $G=\mathrm{E}_{6(-14)}$ with the Lie algebra $\mathfrak{g}=\mathfrak{e}_{6(-14)}$, and $\mathfrak{g}_\mathbb{C}=\mathfrak{e}_6$ the complex simple Lie algebra of type $\mathrm{E}_6$. It is known from \cite[Proposition 10]{H3} that there is a Klein four subgroup $\Gamma$ of $\mathrm{Aut}G$ such that $\mathfrak{g}^\Gamma=\mathfrak{so}(8,1)$. By \cite[Lemma 12 \& Lemma 14]{H3}, $\Gamma$ is generated by two involutive automorphisms $\sigma$ and $\tau$ with $\mathfrak{g}^\sigma\cong\mathfrak{f}_{4(-20)}$ and $\mathfrak{g}^\tau\cong\mathfrak{so}(8,2)\oplus\mathfrak{so}(2)$.

Let $K$ be a $\Gamma$-stable maximal compact subgroup of $G$, and $\mathfrak{k}$ the corresponding compact subalgebra with its complexification $\mathfrak{k}_\mathbb{C}$. Fix a $\Gamma$-stable Cartan subalgebra of the complexified Lie algebra $\mathfrak{k}_\mathbb{C}$, which is automatically a Cartan subalgebra of $\mathfrak{g}_\mathbb{C}$ because $\mathfrak{g}$ is of Hermitian type, and choose a simple root system $\{\alpha_i\mid1\leq i\leq6\}$, the Dynkin diagram of which is given in Figure 1.
\begin{figure}
\centering \scalebox{0.7}{\includegraphics{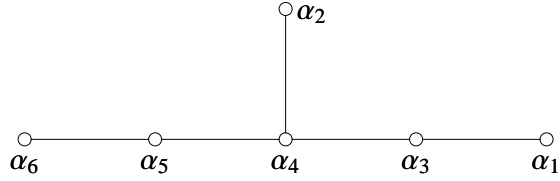}}
\caption{Dynkin diagram of $\mathrm{E}_6$.}
\end{figure}
For each simple root $\alpha_i$, denote by $\omega_i$ the fundamental weight corresponding to $\alpha_i$. Suppose that $\alpha_6$ is the noncompact simple root corresponding to the real form $\mathfrak{g}$. As described in \cite[3.11]{MO}, put $\beta_i:=\alpha_{7-i}$ for $1\leq i\leq 5$, and then $\{\beta_i\}_{i=1}^5$ form a set of simple roots for $\mathfrak{so}(10,\mathbb{C})$, the complexification of the first direct summand of $\mathfrak{g}^\tau$. Write $\mu_i$ for the fundamental weights of $\beta_i$ for $1\leq i\leq 5$.

It is known from \cite[Theorem 12.4]{EHW} that the lowest weight simple $(\mathfrak{g},K)$-module $L(3\omega_6)$ is unitarizable, which is just corresponding to the minimal holomorphic representation of $\mathfrak{g}$. Moreover, by \cite[Definition 1.3 \& Setting 2.6]{MO} that the restriction of $L(3\omega_6)$ to $\mathfrak{g}^\tau$ is discretely decomposable as a $(\mathfrak{g}^\tau,K^\tau)$-module with the decomposition\[L(3\omega_6)\cong\bigoplus\limits_{k=0}^{+\infty}L'(3\mu_1+k\mu_5)\boxtimes\mathbb{C}_{k+2}\]as $(\mathfrak{g}^\tau,K^\tau)$-modules, where $L'(3\mu_1+k\mu_5)$ denotes the lowest weight simple $(\mathfrak{so}(8,2),\mathrm{Spin}(8)\times\mathrm{Spin}(2))$-module with lowest weight $3\mu_1+k\mu_5$ and $\mathbb{C}_{k+2}$ is an $1$-dimensional module of $\mathfrak{so}(2)$. If one forgets the action of $\mathfrak{so}(2)$, then one has the discrete decomposition\[L(3\omega_6)\cong\bigoplus\limits_{k=0}^{+\infty}L'(3\mu_1+k\mu_5)\]as $(\mathfrak{so}(8,2),\mathrm{Spin}(8)\times\mathrm{Spin}(2))$-modules.
\begin{proof}[Proof for \theoremref{19}]
Since $\mathfrak{g}^\tau\cong\mathfrak{so}(8,2)\oplus\mathfrak{so}(2)$ is not compact, the center of $\mathfrak{k}$ does not centralize the whole $\mathfrak{g}^\tau$. It follows that the center of $\mathfrak{k}$ is contained in $\mathfrak{so}(8,2)$. For convenience, write $\mathfrak{h}$ for $\mathfrak{so}(8,2)$ and $\mathfrak{h}_\mathbb{C}$ for its complexification. By \cite[Theorem 7.4]{MO}, $\mathrm{pr}_{\mathfrak{g}\rightarrow\mathfrak{h}}\mathcal{V}_{\mathfrak{g}_\mathbb{C}}(L(3\omega_6))=\mathcal{V}_{\mathfrak{h}_\mathbb{C}}(L'(3\mu_1+k\mu_5))$ for any $k\in\mathbb{Z}_{\geq0}$. One the other hand, it is known from \cite[Setting 2.2]{MO} that $L'(3\mu_1)$ is the minimal holomorphic representation of $\mathfrak{h}=\mathfrak{so}(8,2)$. Moreover, by \cite[Theorem 19]{S}, $L'(3\mu_1)$ is simple as a $(\mathfrak{g}^\Gamma,K^\Gamma)$-module. By \cite[Theorem 7.6]{MO}, $\mathrm{pr}_{\mathfrak{h}\rightarrow\mathfrak{g}^\Gamma}\mathcal{V}_{\mathfrak{h}_\mathbb{C}}(L'(3\mu_1))=\mathcal{V}_{\mathfrak{g}_\mathbb{C}^\Gamma} (L'(3\mu_1))$ because $(\mathfrak{h},\mathfrak{g}^\Gamma)=(\mathfrak{so}(8,2),\mathfrak{so}(8,1))$ is a symmetric pair of anti-holomorphic type. Now one has $\mathrm{pr}_{\mathfrak{g}\rightarrow\mathfrak{g}^\Gamma}\mathcal{V}_{\mathfrak{g}_\mathbb{C}}(L(3\omega_6))=\mathrm{pr}_{\mathfrak{h}\rightarrow \mathfrak{g}^\Gamma}\circ\mathrm{pr}_{\mathfrak{g}\rightarrow\mathfrak{h}}\mathcal{V}_{\mathfrak{g}_\mathbb{C}}(L(3\omega_6))= \mathrm{pr}_{\mathfrak{h}\rightarrow\mathfrak{g}^\Gamma}\mathcal{V}_{\mathfrak{h}_\mathbb{C}}(L'(3\mu_1))=\mathcal{V}_{\mathfrak{g}_\mathbb{C}^\Gamma} (L'(3\mu_1))$. The conclusion follows from the fact that $L(3\omega_6)$ is the minimal holomorphic representation and \propositionref{16}(2).
\end{proof}
\section{A alternative way to study symmetric pairs}
Let $G$ be a noncompact reductive Lie group, $G'$ a reductive subgroup of $G$, and $G''$ a reductive subgroup of $G'$. Needless to say, let $\mathfrak{g}$, $\mathfrak{g}'$, and $\mathfrak{g}''$ be the corresponding Lie algebras with their complexifications $\mathfrak{g}_\mathbb{C}$, $\mathfrak{g}'_\mathbb{C}$, and $\mathfrak{g}''_\mathbb{C}$. Take a maximal compact subgroup $K$ of $G$ such that $K':=K\cap G'$ and $K'':=K\cap G''$ are the maximal compact subgroups of $G'$ and $G''$ respectively.
\begin{definition}\label{2}
A $(\mathfrak{g},K)$-module $X$ is said to be discretely decomposable as a $(\mathfrak{g}',K')$-module if there exists an increasing filtration $\{X_i\}_{i\in\mathbb{Z}^+}$ of $(\mathfrak{g}',K')$-modules such that $\bigcup\limits_{i\in\mathbb{Z}^+}X_i=X$ and $X_i$ is of finite length as a $(\mathfrak{g}',K')$-module for any $i\in\mathbb{Z}^+$.
\end{definition}
\begin{proposition}\label{3}
Let $X$ be a simple $(\mathfrak{g},K)$-module. Then $X$ is discretely decomposable as a $(\mathfrak{g}',K')$-module if and only if there exists a simple $(\mathfrak{g}',K')$-module $Y$ such that $\mathrm{Hom}_{(\mathfrak{g}',K')}(Y,X)\neq\{0\}$. Moreover, suppose that $X$ is a unitarizable simple $(\mathfrak{g},K)$-module. Then $X$ is discretely decomposable as a $(\mathfrak{g}',K')$-module if and only if it is isomorphic to a direct sum of simple $(\mathfrak{g}',K')$-modules.
\end{proposition}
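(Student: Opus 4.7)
My plan is to treat the proposition as two biconditionals and argue each separately. For the first biconditional, the forward direction is straightforward: given the filtration $\{X_i\}_{i\in\mathbb{Z}^+}$ of \definitionref{2}, the term $X_1$ is a nonzero $(\mathfrak{g}',K')$-module of finite length, hence contains a simple submodule $Y$, and the composition $Y\hookrightarrow X_1\hookrightarrow X$ furnishes a nonzero element of $\mathrm{Hom}_{(\mathfrak{g}',K')}(Y,X)$.

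The reverse direction is the substantive part. Starting from an embedding $\iota\colon Y\hookrightarrow X$ with $Y$ a simple $(\mathfrak{g}',K')$-module, I would exploit the $(\mathfrak{g},K)$-simplicity of $X$ to write $X=U(\mathfrak{g}_\mathbb{C})\cdot\iota(Y)$, and then introduce the candidate filtration
\[
X_n:=U_n(\mathfrak{g}_\mathbb{C})\cdot\iota(Y),
\]
where $\{U_n(\mathfrak{g}_\mathbb{C})\}$ is the standard PBW filtration. Each $X_n$ is preserved by $(\mathfrak{g}',K')$ because $\mathfrak{g}'$ normalizes $U_n(\mathfrak{g}_\mathbb{C})$ under the adjoint action and $K'\subseteq K$ acts compatibly, and the union of the $X_n$ exhausts $X$. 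The core task is to show each $X_n$ has finite length as a $(\mathfrak{g}',K')$-module. For this I would fix a $\mathfrak{g}'$-stable complement $\mathfrak{m}$ with $\mathfrak{g}_\mathbb{C}=\mathfrak{g}'_\mathbb{C}\oplus\mathfrak{m}$ (available because $\mathfrak{g}'$ is reductive in $\mathfrak{g}$), and analyse $X_n/X_{n-1}$ as a $(\mathfrak{g}',K')$-equivariant quotient of $S^n(\mathfrak{m})\otimes Y$. Since $S^n(\mathfrak{m})$ is finite-dimensional and tensoring a finite-length Harish-Chandra module with a finite-dimensional one preserves finite length, induction on $n$ gives finite length of each $X_n$. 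This is essentially the content of Kobayashi's earlier results in the discrete decomposability series, which I would cite rather than reprove in detail.

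For the second biconditional under the unitarizability hypothesis, the direction from direct-sum decomposition to discrete decomposability is immediate: the finite partial sums of the simple summands form a filtration each of whose terms has finite length. For the converse I would use the $\mathfrak{g}$-invariant Hermitian form on $X$, which restricts to a $\mathfrak{g}'$-invariant form on every $X_i$; repeated passage to orthogonal complements breaks each $X_i$ into a finite direct sum of simple $(\mathfrak{g}',K')$-modules, and passing to the union decomposes $X$ itself as a direct sum of simples. The main obstacle I anticipate is the finite-length step in the first equivalence, because it is there that one must blend the $(\mathfrak{g},K)$-simplicity of $X$, the reductivity of $\mathfrak{g}'$ in $\mathfrak{g}$, and the compatibility of $K'$ with the PBW filtration; this is precisely the juncture at which appealing to Kobayashi's established results provides the cleanest route.
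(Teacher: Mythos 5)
Your proposal is correct in substance and ends up in the same place as the paper: the paper's entire proof of this proposition is the citation to \cite{Ko4} (Lemma 1.3 and Lemma 1.5), which is precisely the reference you say you would invoke, and your sketch --- the filtration $X_n=U_n(\mathfrak{g}_\mathbb{C})\cdot\iota(Y)$ with $X_n/X_{n-1}$ realized as a $(\mathfrak{g}',K')$-quotient of $S^n(\mathfrak{m})\otimes Y$, plus the splitting via the invariant Hermitian form in the unitarizable case --- is a faithful reconstruction of how those cited lemmas are actually proved. Since the paper offers no argument beyond the citation, there is nothing further to compare.
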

\begin{proof}
See \cite[Lemma 1.3 \& Lemma 1.5]{Ko4}.
\end{proof}
\begin{lemma}\label{18}
Let $X$ be a unitarizable simple $(\mathfrak{g},K)$-module, which is discretely decomposable as a $(\mathfrak{g}',K')$-module and is also discretely decomposable as a $(\mathfrak{g}'',K'')$-module. Suppose that $Y$ is a simple $(\mathfrak{g}',K')$-module such that $\mathrm{Hom}_{(\mathfrak{g}',K')}(Y,X)\neq\{0\}$.
\begin{enumerate}[(1)]
\item Then $Y$ is discretely decomposable as a $(\mathfrak{g}'',K'')$-module.
\item If \conjectureref{17} is true for $(\mathfrak{g},\mathfrak{g}'',X)$, then \conjectureref{17} is also true for $(\mathfrak{g}',\mathfrak{g}'',Y)$.
\end{enumerate}
\end{lemma}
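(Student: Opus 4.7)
The plan is to handle (1) first and then derive (2) by a short diagram chase using \propositionref{16}(1). The two parts play complementary roles: (1) supplies a simple $(\mathfrak{g}'',K'')$-submodule to which \propositionref{16}(1) and the hypothesis of (2) can be applied, while (2) is then essentially the observation that $\mathrm{pr}_{\mathfrak{g}\rightarrow\mathfrak{g}''}=\mathrm{pr}_{\mathfrak{g}'\rightarrow\mathfrak{g}''}\circ\mathrm{pr}_{\mathfrak{g}\rightarrow\mathfrak{g}'}$.

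For (1), I would start by applying the second half of \propositionref{3} twice to the unitarizable simple module $X$: the discrete-decomposability hypotheses give algebraic direct-sum decompositions $X=\bigoplus_\alpha Y_\alpha$ by simple $(\mathfrak{g}',K')$-modules and $X=\bigoplus_\beta Z_\beta$ by simple $(\mathfrak{g}'',K'')$-modules. Since $Y$ is simple as a $(\mathfrak{g}',K')$-module with $\mathrm{Hom}_{(\mathfrak{g}',K')}(Y,X)\neq\{0\}$, $Y$ is isomorphic to some $Y_{\alpha_0}$ and the semisimplicity of $X$ yields a $(\mathfrak{g}',K')$-equivariant projection $p:X\twoheadrightarrow Y$. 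The inclusion $\mathfrak{g}''\subseteq\mathfrak{g}'$ makes $p$ automatically $(\mathfrak{g}'',K'')$-equivariant, and Schur's lemma forces each restriction $p|_{Z_\beta}:Z_\beta\to Y$ to be either zero or injective with simple image. Because $Y=p(X)=\sum_\beta p(Z_\beta)$ is nonzero, at least one $p(Z_{\beta_0})$ is a simple $(\mathfrak{g}'',K'')$-submodule of $Y$. The first half of \propositionref{3}, applied now to the simple $(\mathfrak{g}',K')$-module $Y$, then concludes that $Y$ is discretely decomposable as a $(\mathfrak{g}'',K'')$-module.

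For (2), part (1) together with \propositionref{3} produces a simple $(\mathfrak{g}'',K'')$-module $Y'$ with $\mathrm{Hom}_{(\mathfrak{g}'',K'')}(Y',Y)\neq\{0\}$, and composing with the embedding $Y\hookrightarrow X$ shows $\mathrm{Hom}_{(\mathfrak{g}'',K'')}(Y',X)\neq\{0\}$ as well. The hypothesis that \conjectureref{17} holds for $(\mathfrak{g},\mathfrak{g}'',X)$ gives
$$\mathrm{pr}_{\mathfrak{g}\rightarrow\mathfrak{g}''}\mathcal{V}_{\mathfrak{g}_\mathbb{C}}(X)=\mathcal{V}_{\mathfrak{g}''_\mathbb{C}}(Y').$$
Factoring this projection and applying \propositionref{16}(1) first to the pair $(X,Y)$ and then to $(Y,Y')$ produces the chain
$$\mathcal{V}_{\mathfrak{g}''_\mathbb{C}}(Y')=\mathrm{pr}_{\mathfrak{g}'\rightarrow\mathfrak{g}''}\mathrm{pr}_{\mathfrak{g}\rightarrow\mathfrak{g}'}\mathcal{V}_{\mathfrak{g}_\mathbb{C}}(X)\subseteq\mathrm{pr}_{\mathfrak{g}'\rightarrow\mathfrak{g}''}\mathcal{V}_{\mathfrak{g}'_\mathbb{C}}(Y)\subseteq\mathcal{V}_{\mathfrak{g}''_\mathbb{C}}(Y'),$$
which must collapse to equalities; the middle equality is exactly \conjectureref{17} for $(\mathfrak{g}',\mathfrak{g}'',Y)$.

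The real content lies in (1); the main point is to leverage the unitarity of $X$ to upgrade each abstract discrete-decomposability assumption into an honest algebraic direct-sum decomposition, since one needs both a $(\mathfrak{g}',K')$-equivariant projection onto $Y$ and a semisimple $(\mathfrak{g}'',K'')$-structure on $X$ in order to locate a nonzero simple $(\mathfrak{g}'',K'')$-submodule inside $Y$. Part (2) is then essentially a two-line diagram chase using the functoriality of $\mathrm{pr}$ and the already-known inclusions of \propositionref{16}(1).
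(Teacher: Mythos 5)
Your proof is correct and follows essentially the same route as the paper: in (1) both arguments use \propositionref{3} to turn the two discrete-decomposability hypotheses into algebraic direct sums and then locate a simple $(\mathfrak{g}'',K'')$-submodule of $Y$ by restricting the projection $X\twoheadrightarrow Y$ to the $(\mathfrak{g}'',K'')$-simple summands of $X$ (the paper phrases this via $\mathrm{Hom}_{(\mathfrak{g}'',K'')}(\bigoplus_i m(i)Z_i,Y)$ rather than an explicit map $p$), and (2) is the same sandwich of inclusions via $\mathrm{pr}_{\mathfrak{g}\rightarrow\mathfrak{g}''}=\mathrm{pr}_{\mathfrak{g}'\rightarrow\mathfrak{g}''}\circ\mathrm{pr}_{\mathfrak{g}\rightarrow\mathfrak{g}'}$. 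The one point to make explicit at the end of (2) is that the chain establishes \conjectureref{17} for the particular $Y'$ only, whereas the conjecture quantifies over \emph{all} simple $(\mathfrak{g}'',K'')$-modules $Z$ with $\mathrm{Hom}_{(\mathfrak{g}'',K'')}(Z,Y)\neq\{0\}$; since any such $Z$ also embeds in $X$, either rerun your chain for $Z$ or invoke \propositionref{16}(2) as the paper does.
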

\begin{proof}
Since the unitarizable simple $(\mathfrak{g},K)$-module $X$ is discretely decomposable as a $(\mathfrak{g}',K')$-module and $Y$ is a simple $(\mathfrak{g}',K')$-module such that $\mathrm{Hom}_{(\mathfrak{g}',K')}(Y,X)\neq\{0\}$, $X\cong Y\oplus Y'$ as $(\mathfrak{g}',K')$-modules for some unitarizable $(\mathfrak{g}',K')$-module $Y'$ by \propositionref{3}. Then the natural projection shows that $\{0\}\neq\mathrm{Hom}_{(\mathfrak{g}',K')}(X,Y)\subseteq\mathrm{Hom}_{(\mathfrak{g}'',K'')}(X,Y)$. On the other hand, since $X$ is also discretely decomposable as a $(\mathfrak{g}'',K'')$-module, $X\cong\bigoplus\limits_im(i)Z_i$ is a direct sum of simple $(\mathfrak{g}'',K'')$-modules with multiplicities $m(i)$ by \propositionref{3}. Now $\{0\}\neq\mathrm{Hom}_{(\mathfrak{g}'',K'')}(X,Y)\cong\mathrm{Hom}_{(\mathfrak{g}'',K'')} (\bigoplus\limits_im(i)Z_i,Y)\subseteq\prod\limits_im(i)\mathrm{Hom}_{(\mathfrak{g}'',K'')} (Z_i,Y)$. Hence, there must be some $Z_i$ with $m(i)>0$ such that $\mathrm{Hom}_{(\mathfrak{g}'',K'')}(Z_i,Y)\neq\{0\}$, and $Y$ is discretely decomposable as a $(\mathfrak{g}'',K'')$-module by \propositionref{3}. This proves (1).

If \conjectureref{17} is true for $(\mathfrak{g},\mathfrak{g}'',X)$, then $\mathrm{pr}_{\mathfrak{g}\rightarrow\mathfrak{g}''}\mathcal{V}_{\mathfrak{g}_\mathbb{C}}(X)=\mathcal{V}_{\mathfrak{g}''_\mathbb{C}}(Z)$ for any simple $(\mathfrak{g}'',K'')$-module $Z$ with $\mathrm{Hom}_{(\mathfrak{g}'',K'')}(Z,X)\neq\{0\}$. If $Y$ is simple $(\mathfrak{g}',K')$-module with $\mathrm{Hom}_{(\mathfrak{g}',K')}(Y,X)\neq\{0\}$, then $Y$ is discretely decomposable as a $(\mathfrak{g}'',K'')$-module by (1), and $\mathrm{pr}_{\mathfrak{g}'\rightarrow\mathfrak{g}''}\mathcal{V}_{\mathfrak{g}'_\mathbb{C}}(Y)\subseteq\mathcal{V}_{\mathfrak{g}''_\mathbb{C}}(Z)$ by \propositionref{16}. Thus one has $\mathcal{V}_{\mathfrak{g}''_\mathbb{C}}(Z)=\mathrm{pr}_{\mathfrak{g}\rightarrow\mathfrak{g}''}\mathcal{V}_{\mathfrak{g}_\mathbb{C}}(X)= \mathrm{pr}_{\mathfrak{g}'\rightarrow\mathfrak{g}''}\circ\mathrm{pr}_{\mathfrak{g}\rightarrow\mathfrak{g}'}\mathcal{V}_{\mathfrak{g}_\mathbb{C}}(X)\subseteq \mathrm{pr}_{\mathfrak{g}'\rightarrow\mathfrak{g}''}\mathcal{V}_{\mathfrak{g}'_\mathbb{C}}(Y)\subseteq\mathcal{V}_{\mathfrak{g}''_\mathbb{C}}(Z)$, and thus $\mathrm{pr}_{\mathfrak{g}'\rightarrow\mathfrak{g}''}\mathcal{V}_{\mathfrak{g}'_\mathbb{C}}(Y)=\mathcal{V}_{\mathfrak{g}''_\mathbb{C}}(Z)$. Since any simple $(\mathfrak{g}'',K'')$-submodule of $Y$ is also a simple $(\mathfrak{g}'',K'')$-submodule of $X$, (2) follows from \propositionref{16}(2).
\end{proof}
Retain the notations as in the last section, one obtains the following result.
\begin{corollary}\label{20}
The highest weight simple $(\mathfrak{so}(8,2),\mathrm{Spin}(8)\times\mathrm{Spin}(2))$ modules $L'(3\mu_1+k\mu_5)$ for $k\in\mathbb{Z}_{\geq0}$ are all discretely decomposable as $(\mathfrak{so}(8,1),\mathrm{Spin}(8))$-modules, and \conjectureref{17} is true for $(\mathfrak{so}(8,2),\mathfrak{so}(8,1),L'(3\mu_1+k\mu_5))$ for $k\in\mathbb{Z}_{\geq0}$.
\end{corollary}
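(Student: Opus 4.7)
The plan is a direct application of \lemmaref{18} to the chain of reductive embeddings $\mathfrak{g}'' := \mathfrak{so}(8,1) \subset \mathfrak{g}' := \mathfrak{so}(8,2) \subset \mathfrak{g} := \mathfrak{e}_{6(-14)}$, taking $X := L(3\omega_6)$ and $Y := L'(3\mu_1 + k\mu_5)$. The embedding $\mathfrak{g}'' \subset \mathfrak{g}'$ is the one singled out in the proof of \theoremref{19}, where $\mathfrak{g}^\Gamma = \mathfrak{so}(8,1)$ is placed inside the first summand $\mathfrak{h} = \mathfrak{so}(8,2)$ of $\mathfrak{g}^\tau$. Most of the ingredients that \lemmaref{18} requires have already been assembled in the preceding sections, so the argument should be short.

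First I would verify the hypotheses of \lemmaref{18}. The module $X$ is a unitarizable simple $(\mathfrak{g}, K)$-module by \cite[Theorem 12.4]{EHW}. The explicit branching formula recorded at the end of Section 3 realizes $X$ as a direct sum of the $L'(3\mu_1 + k\mu_5)$, so $X$ is discretely decomposable as a $(\mathfrak{g}', K')$-module and $\mathrm{Hom}_{(\mathfrak{g}', K')}(Y, X) \neq \{0\}$ for every $k$. Discrete decomposability of $X$ as a $(\mathfrak{g}'', K'')$-module is harvested from the proof of \theoremref{19}: that proof invokes \cite[Theorem 19]{S} to show that $L'(3\mu_1)$ is already simple as a $(\mathfrak{g}^\Gamma, K^\Gamma)$-module, and since $L'(3\mu_1)$ embeds into $X$ as a $(\mathfrak{g}'', K'')$-submodule, \propositionref{3} gives the desired decomposability.

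Once the hypotheses are in place, \lemmaref{18}(1) immediately yields the discrete decomposability of each $L'(3\mu_1 + k\mu_5)$ as an $(\mathfrak{so}(8,1), \mathrm{Spin}(8))$-module. Because \theoremref{19} establishes \conjectureref{17} for $(\mathfrak{g}, \mathfrak{g}'', X)$, \lemmaref{18}(2) then delivers \conjectureref{17} for each triple $(\mathfrak{g}', \mathfrak{g}'', Y) = (\mathfrak{so}(8,2), \mathfrak{so}(8,1), L'(3\mu_1 + k\mu_5))$. I do not foresee any substantive obstacle; the corollary is essentially a direct transcription of \lemmaref{18} into the situation set up in Section 3. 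The only point needing genuine attention is to ensure that the $(\mathfrak{g}'', K'')$-discrete decomposability of $X$ is really extracted from \theoremref{19} (via \propositionref{3} and \cite[Theorem 19]{S}) rather than assumed tacitly from the corollary itself.
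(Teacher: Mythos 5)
Your proposal is correct and follows essentially the same route as the paper: the paper's proof likewise applies \lemmaref{18}(1) to the chain $\mathfrak{so}(8,1)\subset\mathfrak{so}(8,2)\subset\mathfrak{e}_{6(-14)}$ with $X=L(3\omega_6)$ for the first statement, and combines \theoremref{19} with \lemmaref{18}(2) for the second. Your extra care in justifying the $(\mathfrak{so}(8,1),\mathrm{Spin}(8))$-discrete decomposability of $L(3\omega_6)$ via \cite[Theorem 19]{S} and \propositionref{3} only makes explicit a hypothesis the paper asserts without comment.
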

\begin{proof}
Since $L(3\omega_6)$ is discretely decomposable as a $(\mathfrak{so}(8,2),\mathrm{Spin}(8)\times\mathrm{Spin}(2))$-module with direct summands $L'(3\mu_1+k\mu_5)$ for $k\in\mathbb{Z}_{\geq0}$ and $L(3\omega_6)$ is also discretely decomposable as a $(\mathfrak{so}(8,1),\mathrm{Spin}(8))$-module, the first statement follows from \lemmaref{18}(1) immediately. The second statement follows from \theoremref{19} and \lemmaref{18}(2).
\end{proof}
\begin{remark}\label{11}
Notice that $(\mathfrak{so}(8,2),\mathfrak{so}(8,1))$ is a symmetric pair and each lowest weight simple module $L'(3\mu_1+k\mu_5)$ is actually a simple generalized Verma module. The result of \corollaryref{20} is contained in \cite[Theorem 4.1 \& Theorem 4.12]{Ko6}. However, \lemmaref{18} offers an alternative way to study the discrete branching laws and \conjectureref{17} for symmetric pairs through Klein four symmetric pairs, and \corollaryref{20} can be regarded as an example.
\end{remark}
\section{Proof for \theoremref{9}}
In the final part of this article, the author will discuss the restrictions of discrete series representations for Klein four symmetric pairs $(G,G^\Gamma)$ with exceptional simple Lie groups $G$ of Hermitian type.

Let $G$ be a noncompact reductive Lie group, and $G'$ a reductive subgroup of $G$.
\begin{definition}\label{1}
Let $\pi$ be an irreducible unitary representation of $G$ on a Hilbert space. Then $\pi$ is said to be $G'$-admissible if $\pi$ decomposes as a Hilbert direct sum of irreducible unitary representations of $G'$ with finite multiplicities, i.e.,\[\pi\cong\widehat{\bigoplus\limits_{\tau\in\widehat{G'}}}m(\tau)\tau\]with the multiplicities $m(\tau)\in\mathbb{Z}_{\geq0}$, where $\widehat{G'}$ denotes the unitary dual of $G'$.
\end{definition}
\begin{proposition}\label{4}
Let $\pi$ be a discrete series representation of $G$. Then the following conditions are equivalent:
\begin{enumerate}[(1)]
\item $\pi$ is $K'$-admissible;
\item $\pi$ is $G'$-admissible;
\item the underlying $(\mathfrak{g},K)$-module $\pi_K$ is discretely decomposable as a $(\mathfrak{g}',K')$-module with finite multiplicities.
\end{enumerate}
\begin{proof}
The directions $(1)\Rightarrow(3)$ and $(3)\Rightarrow(2)$ follow from \cite[Proposition 1.6]{Ko4} and \cite[Theorem 2.7]{Ko1} respectively, which hold for general unitary representations $\pi$. If $\pi$ is a discrete series representation of $G$, then the direction $(2)\Rightarrow(1)$ follows from \cite[Corollary 2.5]{DGV}.
\end{proof}
\end{proposition}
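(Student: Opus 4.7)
The plan is to establish the three-way equivalence by proving the cycle $(1)\Rightarrow(3)\Rightarrow(2)\Rightarrow(1)$, taking advantage of the fact that the first two implications are valid for arbitrary irreducible unitary representations of $G$, so only the last one genuinely depends on the hypothesis that $\pi$ is a discrete series. This partition of difficulty is the natural one: conditions (1) and (3) are essentially $K$-theoretic / algebraic conditions that couple cleanly to one another, condition (2) is genuinely analytic, and only the passage from (2) back to a $K'$-statement uses the special structure of $\pi$.

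For $(1)\Rightarrow(3)$, assuming $\pi$ is $K'$-admissible, I would observe that every $K$-type appearing in $\pi$ is finite dimensional, hence decomposes into finitely many $K'$-types; combined with the finiteness of the $K'$-multiplicities in $\pi$ globally, this allows one to organize the $K'$-isotypic components of $\pi$ so that their $K$-finite vectors form an exhausting filtration of $\pi_K$ by finite-length $(\mathfrak{g}',K')$-submodules, which by \definitionref{2} is exactly discrete decomposability, with finite multiplicities built in. This is essentially the content of \cite[Proposition 1.6]{Ko4} and I would just cite it.

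For $(3)\Rightarrow(2)$, the idea is to pass from the algebraic decomposition of $\pi_K$ into simple $(\mathfrak{g}',K')$-modules $Y_i$ with finite multiplicities to the analytic Hilbert direct sum decomposition of $\pi$ itself. Unitarity of $\pi$ restricts to a compatible $(\mathfrak{g}',K')$-invariant Hermitian form on each $Y_i$, which integrates to an irreducible unitary $G'$-representation, and density of the algebraic direct sum in $\pi$ rules out any continuous spectrum. The precise statement I need is \cite[Theorem 2.7]{Ko1}, again valid for general irreducible unitary representations.

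The main obstacle is $(2)\Rightarrow(1)$. For a general irreducible unitary representation, $G'$-admissibility need not imply $K'$-admissibility; the $K$-type multiplicities are not directly controlled by the $G'$-decomposition. For a discrete series, however, the $K$-type structure is rigidly constrained---by Harish-Chandra's parametrization and the Blattner-type multiplicity formulas, or equivalently by the realization of $\pi$ as $L^2$-cohomology of a homogeneous line bundle on $G/T$---and this rigidity propagates $G'$-admissibility down to the maximal compact subgroup $K'$. I would invoke \cite[Corollary 2.5]{DGV}, which is exactly tailored to this situation, to close the cycle. Identifying the correct discrete-series-specific input and citing it precisely is where the real work lies; the other two implications are essentially formal.
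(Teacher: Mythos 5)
Your proposal is correct and follows exactly the same route as the paper: the cycle $(1)\Rightarrow(3)\Rightarrow(2)\Rightarrow(1)$, with the first two implications cited from \cite[Proposition 1.6]{Ko4} and \cite[Theorem 2.7]{Ko1} as facts valid for general unitary representations, and the discrete-series-specific step $(2)\Rightarrow(1)$ closed by \cite[Corollary 2.5]{DGV}. The additional commentary you give on each implication is consistent with the cited sources, but the proof itself is the same citation cycle the paper uses.
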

\begin{remark}\label{5}
The direction $(3)\Rightarrow(1)$ was also proved in \cite[Theorem 1.3]{ZL}.
\end{remark}
\begin{lemma}\label{8}
Let $(G,G^\Gamma)$ be a Klein four symmetric pair. Suppose that $\pi$ is a discrete series representation of $G$. If $\pi$ is $G^\Gamma$-admissible, then its underlying $(\mathfrak{g},K)$-module $\pi_K$ is discretely decomposable as a $(\mathfrak{g}^\sigma,K^\sigma)$-module for any $\sigma\in\Gamma$.
\end{lemma}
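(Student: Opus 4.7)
The plan is to work along the tower of compact subgroups $K^\Gamma\subseteq K^\sigma\subseteq K$ and to invoke \propositionref{4} twice: once to convert $G^\Gamma$-admissibility of $\pi$ into a statement about $K^\Gamma$, and once to convert a statement about $K^\sigma$ back into one about $(\mathfrak{g}^\sigma,K^\sigma)$-modules. The case $\sigma=e$ is trivial since $\pi_K$ is already simple as a $(\mathfrak{g},K)$-module, so assume $\sigma$ is a nontrivial involution in $\Gamma$; writing $\tau$ for another nonidentity element, one has $G^\Gamma=G^\sigma\cap G^\tau\subseteq G^\sigma$, and choosing $K$ to be $\Gamma$-stable (which can always be arranged) produces the inclusions $K^\Gamma\subseteq K^\sigma\subseteq K$ of compact subgroups, with $K^\sigma$ a maximal compact subgroup of the reductive group $G^\sigma$.

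Since $\pi$ is a discrete series representation and is $G^\Gamma$-admissible by hypothesis, \propositionref{4} (specifically (2)$\Rightarrow$(1)) gives that $\pi$ is $K^\Gamma$-admissible: every irreducible $K^\Gamma$-representation occurs with finite multiplicity in the Peter--Weyl decomposition of $\pi|_{K^\Gamma}$. The plan is then to upgrade this to $K^\sigma$-admissibility. Compactness of $K^\sigma$ already furnishes a Hilbert direct sum decomposition $\pi|_{K^\sigma}\cong\widehat{\bigoplus}_{V\in\widehat{K^\sigma}}m(V)V$ automatically, so only the finiteness of each $m(V)$ requires argument. For this, pick any irreducible $K^\Gamma$-constituent $\eta$ of the finite-dimensional module $V|_{K^\Gamma}$, with multiplicity $n(\eta,V)\geq1$; then the multiplicity of $\eta$ in $\pi|_{K^\Gamma}$ is at least $m(V)n(\eta,V)\geq m(V)$ and is finite by $K^\Gamma$-admissibility, forcing $m(V)<\infty$.

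With $K^\sigma$-admissibility of $\pi$ in hand, a final application of \propositionref{4} — this time to the reductive subgroup $G^\sigma\subseteq G$, via (1)$\Rightarrow$(3) — shows that $\pi_K$ is discretely decomposable (in fact admissibly so) as a $(\mathfrak{g}^\sigma,K^\sigma)$-module, which is the desired conclusion. The only non-formal step is the multiplicity-counting that propagates admissibility up the chain $K^\Gamma\subseteq K^\sigma$; it is routine, but it is also precisely where the Klein four hypothesis enters, since without it one could not guarantee the ambient containment $G^\Gamma\subseteq G^\sigma$ that underlies the whole argument.
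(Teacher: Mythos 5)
Your argument is correct and follows essentially the same route as the paper: the paper's one-line proof cites \cite[Theorem 1.2]{Ko2} together with \propositionref{4} precisely to carry out the chain $G^\Gamma$-admissible $\Rightarrow$ $K^\Gamma$-admissible $\Rightarrow$ $K^\sigma$-admissible $\Rightarrow$ discretely decomposable as a $(\mathfrak{g}^\sigma,K^\sigma)$-module. The only difference is that you prove the intermediate monotonicity step (admissibility passes from the smaller compact subgroup $K^\Gamma$ to the larger $K^\sigma$ by the multiplicity count) by hand, where the paper delegates it to the citation.
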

\begin{proof}
This follows from \cite[Theorem 1.2]{Ko2} and \propositionref{4} immediately.
\end{proof}
\begin{proof}[Proof for \theoremref{9}]
Since $(G,G^\Gamma)$ is supposed to be of non-holomorphic type, there exists a nonidentity element $\sigma\in\Gamma$ such that $(G,G^\sigma)$ is a symmetric pair of anti-holomorphic type. Now assume that there exists a discrete series representation $\pi$ of $G$ which is $G^\Gamma$-admissible, and then by \lemmaref{8}, its underlying $(\mathfrak{g},K)$-module $\pi_K$ is discretely decomposable as a $(\mathfrak{g}^\sigma,K^\sigma)$-module. According to the classification result \cite[Theorem 5.2 \& Table 1]{KO2}, the only possible symmetric pair of anti-holomorphic type is $(\mathfrak{g},\mathfrak{g}^\sigma)=(\mathfrak{e}_{6(-14)},\mathfrak{f}_{4(-20)})$. It is well known that as the underlying $(\mathfrak{g},K)$-module of a discrete series representation, $\pi_K=A_\mathfrak{b}(\lambda)$ for some $\theta$-stable Borel subalgebra of $\mathfrak{g}$. However, according to the classification result in \cite[Table C.3]{KO1}, $(\mathfrak{g},\mathfrak{g}^\sigma)=(\mathfrak{e}_{6(-14)},\mathfrak{f}_{4(-20)})$ is not of discrete series type; in other words, there does not exist any $A_\mathfrak{b}(\lambda)$ for a $\theta$-stable Borel subalgebra of $\mathfrak{g}$ which is discretely decomposable as a $(\mathfrak{g}^\sigma,K^\sigma)$-module. Thus, one obtains a contradiction.
\end{proof}

\end{document}